\theoremstyle{plain}
\newtheorem{theorem}{Theorem}[section]
\newtheorem{lemma}[theorem]{Lemma}
\newtheorem{problem}[theorem]{Problem}
\newtheorem*{acknowledgment}{Acknowledgment}
\numberwithin{equation}{section}
\newcommand{\by}[1]{\overset{#1}=}             
\newcommand{\byeq}[1]{\by{\eqref{#1}}}         
\newcommand{\GRB}{\mathbf{GRB}}
\newcommand{\I}{\mathbf{I}}
\title{The solution of an open problem on semigroup inclusion classes}
\author{Maria Leonor Ara\'{u}jo}
\author{Maria Teresa Ara\'{u}jo}
\author{Michael Kinyon${}^*$}
\thanks{${}^*$ Partially supported by Simons Foundation Collaboration Grant 359872 and
by the Fundaç\~{a}o para a Ci\^{e}ncia e a Tecnologia (Portuguese Foundation for Science and Technology)
through the project PTDC/MAT-PUR/31174/2017}
\address[M.L. Ara\'{u}jo, M.T. Ara\'{u}jo]{Col\'egio Mira Rio, Estrada de Telheiras n 113, 1600--768 Lisboa, Portugal}
\email[M.L. Ara\'{u}jo]{\url{Maria.Araujo.20@colegiomirario.pt}}
\email[M.T. Ara\'{u}jo]{\url{Maria.Teresa.20@colegiomirario.pt}}
\address[Kinyon]{Department of Mathematics, University of Denver, Denver, CO 80208, USA}
\address[Kinyon]{Centre for Mathematics and Applications, Faculdade de Ci\^{e}ncias e Tecnologia,
Universidade Nova de Lisboa, Campus da Caparica, 2829-516 Caparica, PT}
\email[Kinyon]{mkinyon@du.edu}
\begin{document}

\begin{abstract}
The semigroup inclusion class $\I = [xyxy = xy; xyz \in \{xywz, xuyz\}]$ is the union
of two maximal subvarieties of $\GRB= [xyzxy=xy]$.
Monzo \cite{monzo} described the lattice of semigroup inclusion classes below $\I$ and
asked if $\I$ is covered by $\GRB$. Our main result is a characterization of $\I$
which makes it easy to answer Monzo's question in the negative.
\end{abstract}

\maketitle

\section{Introduction}

A semigroup $S$ is a \emph{rectangular band} if it satisfies the identity $xyx = x$, or equivalently, the pair of
identities $xyz = xz$ and $x^2 = x$.
Throughout this paper we work in the variety $\GRB$, the class of all semigroups $S$ such that $S^2 = \{ xy\mid x,y\in S\}$ is
a rectangular band \cite{monzo}. (We will prove below that $\GRB= [xyzxy=xy]$.)

Let $\I$ denote the union of the two subvarieties
$\GRB\cap [xywz = xyz]$ and $\GRB\cap [xuyz = xyz]$. The class $\I$ is not a variety, it is an \emph{inclusion class}
\cite{lyapin,monzo2008} axiomatized as follows $$\I = [xyxy = xy; xyz \in \{xywz, xuyz\}] = \GRB\cap [xyz \in \{xywz, xuyz\}].$$

Monzo \cite{monzo} completely described the lattice of inclusion classes below $\I$. In the open questions at the
end of his paper, he posed the following

\begin{problem}[\cite{monzo}]\label{Prb:monzo}
  Does $\GRB$ cover $I$?
\end{problem}

The purpose of this note is to answer Monzo's question in the negative. This is made easier
by our main result, which is a characterization of the class $\I$:

\begin{theorem}\label{characterisation}
Let $S$ be a semigroup in $\GRB$. The following are equivalent:
\begin{enumerate}
\item For all $x,y,z,u\in S$, $xyz \in \{xywz, xuyz\}$;
\item For all $x,y,z\in S$, $xy^2 = xy$ or $z^2 x = zx$.
\end{enumerate}
\end{theorem}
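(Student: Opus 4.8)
\emph{Proof proposal.} The plan is to work throughout with the rectangular band structure of $S^{2}$. Every product of two or more elements of $S$ lies in $S^{2}$, and $S^{2}$, being a rectangular band, factors as $S^{2}=I\times\Lambda$ with $(i,\lambda)(j,\mu)=(i,\mu)$; write $e=(\mathsf{r}(e),\mathsf{c}(e))$ for $e\in S^{2}$. Idempotency of $S^{2}$ gives $\mathsf{r}(es)=\mathsf{r}(e)$ and $\mathsf{c}(se)=\mathsf{c}(e)$ for all $e\in S^{2}$, $s\in S$, and the identity $xyzxy=xy$ (valid in $\GRB$) gives $e\,s\,e=e$ for $e\in S^{2}$, $s\in S$. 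From these I would derive the coordinate formulas
\[
xyz=(\mathsf{r}(xy),\mathsf{c}(yz)),\qquad xywz=(\mathsf{r}(xy),\mathsf{c}(wz)),\qquad xuyz=(\mathsf{r}(xu),\mathsf{c}(yz)),
\]
and hence the equivalences
\[
xyz=xywz\iff\mathsf{c}(yz)=\mathsf{c}(wz),\qquad xyz=xuyz\iff\mathsf{r}(xu)=\mathsf{r}(xy),
\]
together with $xy^{2}=xy\iff\mathsf{c}(xy)=\mathsf{c}(y^{2})$ and $z^{2}x=zx\iff\mathsf{r}(zx)=\mathsf{r}(z^{2})$. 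Thus (1) and (2) both become assertions about the two functions $\mathsf{r},\mathsf{c}$. It is also convenient to note that (2) is equivalent to: every $x\in S$ either satisfies $xy^{2}=xy$ for all $y$ or satisfies $y^{2}x=yx$ for all $y$ (the two disjuncts of (2) involve disjoint sets of the remaining variables).

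For $(1)\Rightarrow(2)$, fix $x,y,z$ and substitute $(x,y,z,w,u)\mapsto(z,x,y,y,z)$ into (1); this gives $zxy\in\{zxy^{2},\ z^{2}xy\}$. If $zxy=zxy^{2}$, multiplying on the left by $xy$ and using $xyzxy=xy$ twice yields $xy=xy\cdot zxy=xy\cdot zxy^{2}=(xyzxy)y=xy^{2}$. If $zxy=z^{2}xy$, multiplying on the right by $zx$ and using $zxyzx=zx$ yields $zx=zxyzx=z^{2}xy\cdot zx=z(zxyzx)=z^{2}x$. Either way (2) holds.

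For $(2)\Rightarrow(1)$, the crux is the following claim: \emph{if $S$ satisfies (2), then $S$ satisfies the identity $xy^{2}=xy$, or $S$ satisfies the identity $x^{2}y=xy$.} Suppose the first identity fails, say $ab^{2}\ne ab$, and set $e=ab\in S^{2}$, so $eb\ne e$. First, for every $p\in S$ one has $(pa)b^{2}\ne(pa)b$: if instead $pab^{2}=pab$, that is $peb=pe$ (since $pab^{2}=peb$ and $pab=pe$), then from $e=epe$ we get $eb=(epe)b=e(peb)=e(pe)=epe=e$, a contradiction. Hence no element of the form $pa$ satisfies $xy^{2}=xy$, so by (2) every such element satisfies $y^{2}(pa)=y(pa)$; that is, $y^{2}pa=ypa$ for all $y,p$. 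Now take arbitrary $y,q$. Both $yq$ and $y^{2}q$ lie in $S^{2}$, so $yq=(yqa)(yq)$ and $y^{2}q=(y^{2}qa)(y^{2}q)=(yqa)(y^{2}q)$, the last step using $y^{2}qa=yqa$. Since $y^{2}q=y\cdot yq$ has the same column as $yq$, multiplying on the left by $yqa\in S^{2}$ gives $(yqa)(y^{2}q)=(yqa)(yq)$, whence $y^{2}q=yq$; thus $S$ satisfies $x^{2}y=xy$, proving the claim. Finally, (1) follows: if $S$ satisfies $xy^{2}=xy$ then $\mathsf{c}(yz)=\mathsf{c}(z^{2})=\mathsf{c}(wz)$ for all $y,z,w$, so $xyz=xywz$ always; if $S$ satisfies $x^{2}y=xy$ then $\mathsf{r}(xu)=\mathsf{r}(x^{2})=\mathsf{r}(xy)$ for all $x,y,u$, so $xyz=xuyz$ always.

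The step I expect to be the main obstacle is the claim in the last paragraph — specifically the propagation (failure of $xy^{2}=xy$ at $a$ forcing its failure at every $pa$) and the ensuing right-cancellation of $a$, each resting on a careful use of $xyzxy=xy$ together with the rectangular band bookkeeping. Setting up that bookkeeping (the coordinate formulas and their consequences) is the other, more routine, piece of preparatory work; by comparison the direction $(1)\Rightarrow(2)$ is short once the substitution $(z,x,y,y,z)$ is spotted.
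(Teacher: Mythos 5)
Your proposal is correct, and it takes a genuinely different route from the paper's proof. For $(1)\Rightarrow(2)$ the paper (Lemma \ref{lemma18}) passes through a chain of auxiliary pointwise disjunctions \eqref{Eq:18}, \eqref{Eq:tmp1}, \eqref{Eq:tmp2} and a final contradiction, whereas you obtain (2) in one stroke from the single instance $zxy\in\{zxy^2,\,z^2xy\}$ of (1), multiplying on the left by $xy$ or on the right by $zx$ and applying \eqref{Eq:GRB2}; both of your cases check out, and this is a substantial shortening. For $(2)\Rightarrow(1)$ the paper (Lemma \ref{converse}) again argues purely equationally by contradiction, while you prove the sharper structural claim that (2) forces one of the global identities $xy^2=xy$ or $x^2y=xy$, using the standard coordinatization of the rectangular band $S^2$ as a product of a left-zero and a right-zero semigroup. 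I verified the two delicate steps you flagged: the propagation argument (if $ab^2\neq ab$ and $e=ab$, then $pab^2=pab$ would give $eb=e(peb)=e(pe)=epe=e$, contradicting $eb=ab^2\neq ab=e$), and the cancellation $y^2q=y^2qa\cdot y^2q=yqa\cdot y^2q=yqa\cdot yq=yq$, where the middle equality uses $y^2qa=yqa$ (from (2) applied to the elements $pa$) and the column-matching step is legitimate because $y^2q=y(yq)$ and $yq$ both lie in $S^2$ with the same second coordinate; the bookkeeping identities ($\mathsf{r}(es)=\mathsf{r}(e)$, $\mathsf{c}(se)=\mathsf{c}(e)$, $ese=e$ for $e\in S^2$) all follow from \eqref{Eq:GRB0}--\eqref{Eq:GRB2} as you indicate. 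What your approach buys: a much shorter forward direction, and a converse that exhibits explicitly that condition (2) already places $S$ in one of the two subvarieties whose union defines $\I$, so the equivalence with (1) becomes transparent. What the paper's approach buys: it stays entirely within equational reasoning in $\GRB$, never invoking the structure theorem for rectangular bands, which keeps the argument self-contained and close to the prover-assisted style acknowledged by the authors.
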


From this theorem we inferred  the inclusion class  $xy^2 = xy$ or $y^2 x = yx$, for all $x,y\in S$, a class that properly contains $\I$ and is properly contained in $\GRB$, thus solving  Problem \ref{Prb:monzo}.
\section{Proof of Theorem \ref{characterisation}}

For ease of reference, we collect identities that hold in $\GRB$. In the list below, either
\eqref{Eq:GRB2} or the pair \eqref{Eq:GRB0}, \eqref{Eq:GRB1}
could be taken as an equational base of axioms for $\GRB$.

\begin{lemma}\label{Lem:GRB_identities}
The following identities hold in $\GRB$.
\begin{align}
   xyxy &= xy       \label{Eq:GRB0} \\
  xyzuv &= xyuv     \label{Eq:GRB1} \\
  xyzxy &= xy       \label{Eq:GRB2} \\
   xyyz &= xyz      \label{Eq:GRB3} \\
   xxx &= xx        \label{Eq:GRB4}\,.
\end{align}
\end{lemma}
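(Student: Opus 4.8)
The plan is to work directly from the semantic definition of $\GRB$: for $S \in \GRB$, the set $S^2$ is a rectangular band, and I would extract from this three basic facts before touching the five identities. First, $S^2$ is a two-sided ideal of $S$: if $p = ab \in S^2$ and $s \in S$, then $sp = (sa)b$ and $ps = a(bs)$ both lie in $S^2$; in particular, any product of two or more elements of $S$ lies in $S^2$. Second, since a rectangular band satisfies $x^2 = x$, every element of $S^2$ is idempotent, so $p = p^2$ whenever $p \in S^2$. Third, the rectangular band identity gives $pqr = pr$ for all $p,q,r \in S^2$. These three facts are all I intend to use.

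With these in hand, \eqref{Eq:GRB0} is immediate, since $xy \in S^2$ is idempotent: $xyxy = (xy)^2 = xy$. The identity \eqref{Eq:GRB4} follows from idempotency applied to the two elements $x^2, x^3 \in S^2$: idempotency of $x^2$ gives $x^4 = x^2$ and hence $x^6 = x^2$, while idempotency of $x^3$ gives $x^6 = x^3$, so $xxx = x^3 = x^6 = x^2 = xx$.

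The crux is \eqref{Eq:GRB1}, $xyzuv = xyuv$, which says the third of five factors is irrelevant. The obstacle is that the identity $pqr = pr$ requires all three factors to lie in $S^2$, i.e. each to be a product of at least two letters, and a five-letter word cannot be split into three such blocks. The plan is to sidestep this by padding using idempotency. Since $uv \in S^2$ is idempotent, $xyzuv = xyzuvuv$; now the right-hand side factors as $(xy)(zuv)(uv)$ with all three blocks $xy$, $zuv$, $uv$ lying in $S^2$, so $pqr = pr$ yields $(xy)(zuv)(uv) = (xy)(uv) = xyuv$. This establishes \eqref{Eq:GRB1}.

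The remaining two identities then fall out of \eqref{Eq:GRB0} and \eqref{Eq:GRB1} by substitution. For \eqref{Eq:GRB2}, instantiating \eqref{Eq:GRB1} at $u = x$, $v = y$ gives $xyzxy = xyxy$, which equals $xy$ by \eqref{Eq:GRB0}. For \eqref{Eq:GRB3}, I would first pad $xyz = xyzyz$ using idempotency of $yz \in S^2$, then instantiate \eqref{Eq:GRB1} at $u = y$, $v = z$ to get $xyzyz = xyyz$; combining the two gives $xyyz = xyz$. I expect establishing \eqref{Eq:GRB1} — in particular spotting the idempotent padding that turns a five-letter word into a genuine three-block product in $S^2$ — to be the only real step, with everything else reducing to bookkeeping with the three basic facts.
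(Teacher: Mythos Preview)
Your proof is correct and follows essentially the same approach as the paper: both extract the rectangular-band facts about $S^2$ and use the idempotent padding trick $xyzuv = xyzuvuv = (xy)(zuv)(uv)$ to establish \eqref{Eq:GRB1}, from which everything else follows. The only minor deviations are that you derive \eqref{Eq:GRB4} directly from idempotency of $x^2$ and $x^3$ (the paper instead gets it from \eqref{Eq:GRB3}), and your route to \eqref{Eq:GRB3} via $xyz = xyzyz = xyyz$ is a touch more direct than the paper's, which pads $xyyz$ to $xyzxyyz$ and invokes \eqref{Eq:GRB2}.
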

\begin{proof}
For $S$ a semigroup in $\GRB$, the identity \eqref{Eq:GRB0} holds since $S^2$ is a band.
We have $xy\cdot z\cdot uv \byeq{Eq:GRB0} xy(zuv)uv = xyuv$ since $S^2$ is a rectangular band,
so \eqref{Eq:GRB1} holds. Then \eqref{Eq:GRB2} follows immediately from \eqref{Eq:GRB0} and
\eqref{Eq:GRB1}. For \eqref{Eq:GRB3}, we have $xyyz \byeq{Eq:GRB1} x\underbrace{yzxyyz} \byeq{Eq:GRB2} xyz$.
Finally, for \eqref{Eq:GRB4}, we have $xxx \byeq{Eq:GRB3} xxxx \byeq{Eq:GRB0} xx$.
\end{proof}

The first step in our proof of Theorem \ref{characterisation} is to show that (1) implies (2).

\begin{lemma}\label{lemma18}
Let $S$ be a semigroup in $\GRB$ such that for all $x,y,z,u,w\in S$, $xyz \in \{xywz, xuyz\}$.
Then for all $x,y,z\in S$, $xy^2 = xy$ or $z^2 x = zx$.
\end{lemma}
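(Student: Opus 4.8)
The plan is to exploit the hypothesis by instantiating the two-element inclusion $xyz \in \{xywz, xuyz\}$ at cleverly chosen variables so that the "bad" alternative collapses to something already known in $\GRB$, forcing the other alternative. The natural first move is to feed the disjunction an $x$-term and a $z$-term that are themselves of the form needed in conclusion (2), namely products built from $xy$ and $z$, and then use the $\GRB$ identities from Lemma \ref{Lem:GRB_identities} — especially \eqref{Eq:GRB1} ($xyzuv = xyuv$, absorbing interior letters) and \eqref{Eq:GRB3} ($xyyz = xyz$) — to simplify both sides of each alternative.

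Concretely, first I would substitute into the hypothesis with the first letter replaced by $xy$ and the last letter by $z$, i.e.\ consider $(xy)\cdot y \cdot z \in \{(xy)\,w\,z,\ (xy)\,u\,y\,z\}$ for suitable $u,w$; choosing $w$ and $u$ to be among $x,y,z$ lets both alternatives be rewritten via \eqref{Eq:GRB1}--\eqref{Eq:GRB3} into short words in $x,y,z$. The aim is to arrive at a statement of the shape "$A = B$ or $A = C$'' where, after further reduction, one alternative reads $xy^2 = xy$ (or becomes equivalent to it) and the other reads $z^2x = zx$ (or becomes equivalent to it). A second, symmetric substitution — dualizing left/right, e.g.\ putting $z x$ in the last slot and $x$ first — may be needed to pin down the $z^2x = zx$ side, since $\GRB$ is not left-right symmetric in an obvious way and the two members of the inclusion play asymmetric roles (the $xywz$ member refines on the right of $x$, the $xuyz$ member on the left of $z$).

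I expect the main obstacle to be bookkeeping: choosing the substitutions so that, on the one hand, the target equations $xy^2=xy$ and $z^2x=zx$ actually appear, and on the other hand the "junk'' terms produced really do simplify — via \eqref{Eq:GRB1}, \eqref{Eq:GRB2}, \eqref{Eq:GRB3} — to one of those two equations rather than to some third unrelated relation. In particular one must be careful that applying \eqref{Eq:GRB1} does not wipe out the very letters (the repeated $y$, or the repeated $z$) that witness the squares $y^2$ and $z^2$; this suggests the repeated letter should sit at a boundary position (adjacent to the first or last letter of the word) where \eqref{Eq:GRB3} rather than \eqref{Eq:GRB1} governs. Once the right instantiation is found, the derivation should be a short chain of rewrites using only Lemma \ref{Lem:GRB_identities}, and the disjunction in the hypothesis transfers directly to the disjunction in conclusion (2).
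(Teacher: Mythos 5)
Your overall strategy---take one well-chosen instance of the inclusion and convert each of its two members into the corresponding disjunct of condition (2) using the $\GRB$ identities---can in fact be made to work, but your plan as written has a genuine gap: the substitutions you actually propose are provably uninformative, and the two ingredients that make the strategy succeed are not identified. If you put $xy$ in the first slot and $z$ in the last, the member $(xy)uyz$ collapses to $xyz$ identically, since by \eqref{Eq:GRB1} the inserted $u$ (having two letters on each side) is erased and then \eqref{Eq:GRB3} removes the doubled $y$; so that branch of the inclusion is a tautology, the disjunction is automatically satisfied, and the instance carries no information at all (the other member meanwhile just reproduces the original $xywz$ shape). Dually, putting $zx$ in the last slot erases the inserted $w$ by \eqref{Eq:GRB1} and trivializes the other branch. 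This is more serious than the bookkeeping issue you flag: an instance in which one member is a $\GRB$-identity tells you nothing about the remaining member, so neither of your two substitutions, nor their combination, yields anything. Moreover, the working instantiation and the mechanism for turning each alternative into $xy^2=xy$, respectively $z^2x=zx$, are exactly the content of the lemma and are left unspecified.

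For the record, the paper argues differently: it first proves the auxiliary disjunction \eqref{Eq:18} ($xyzu=xzu$ or $vzwu=vzu$), specializes it twice, and finishes by contradiction. Your one-instance idea can, however, be completed, and more briefly: apply the hypothesis with first letter $z$, middle letter $x$, last letter $y$, and $u:=z$, $w:=y$, so the instance reads $zxy=zxyy$ or $zxy=zzxy$; here neither member trivializes because the inserted letters sit adjacent to the ends of the word. The missing trick is to hit the two alternatives with \emph{different} outer multipliers and collapse with \eqref{Eq:GRB2}: if $zxy=zxyy$, multiply on the left by $xy$ to get $xy \byeq{Eq:GRB2} xy\,z\,xy = xy(zxyy) = (xyzxy)y \byeq{Eq:GRB2} xyy$; if $zxy=zzxy$, multiply on the right by $zx$ to get $zx \byeq{Eq:GRB2} zxy\,zx = (zzxy)zx = z(zxyzx) \byeq{Eq:GRB2} zzx$. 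Merely rewriting the instance itself with \eqref{Eq:GRB1}--\eqref{Eq:GRB3}, as your plan envisages, cannot produce these conclusions.
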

\begin{proof}
First we prove that for all $x,y,z,u,w,v\in S$,
\begin{equation}\label{Eq:18}
xyzu = xzu\qquad\text{or}\qquad vzwu = vzu.
\end{equation}
Indeed, suppose $xyzu \neq xzu$, that is,  $abcd \neq acd$ for some $a,b,c,d\in S$. We claim that $vcwd=vcd$. In fact, by (\ref{Eq:18}), if $abcd \neq acd$, then
$acwd = acd$ for all $w\in S$ (*). Thus for all $v,w\in S$,
\[
\underbrace{vcw}d \byeq{Eq:GRB3} \underbrace{vccw}d \byeq{Eq:GRB2} vc\underbrace{acwd}
\stackrel{(*)}{=} vcacd \byeq{Eq:GRB2} vccd \byeq{Eq:GRB3} vcd\,,
\]
as claimed.

Now in \eqref{Eq:18}, set $x = z$, $u = y$ and simplify to get
$zy = zzy$ or $vzwy = vzy$ for all $y,z,v,w\in S$. For convenience,
rename $y \to x$ so that
\begin{equation}\label{Eq:tmp1}
  zx = zzx\qquad\text{or}\qquad vzwx = vzx
\end{equation}
for all $x,z,v,w\in S$.

Again in \eqref{Eq:18}, set $w = v$, $u = z$ and simplify to get
$xyzz = xzz$ or $vz = vzz$ for all $x,y,z,v\in S$. Now for all $u\in S$,
$xyzu \byeq{Eq:GRB3} xyzzu$ and $xzzu \byeq{Eq:GRB3} xzu$, and so we have
$xyzu = xzu$ or $vz = vzz$ for all $x,y,z,u,v\in S$. For convenience,
rename $v \to x$, $z\to y$, $x\to s$, $y\to u$, $u\to t$ to get
\begin{equation}\label{Eq:tmp2}
  suyt = syt\qquad\text{or}\qquad xy = xyy
\end{equation}
for all $x,y,s,t,u\in S$.

Now suppose $xy^2 \neq xy$ and $z^2 x \neq zx$ for some $x,y,z\in S$, that is, $ab^2 \neq ab$ and $c^2 a \neq ca$ for some $a,b,c\in S$ (**).
Then by \eqref{Eq:tmp1} and \eqref{Eq:tmp2}, we get
\begin{align}
  vcwa &= vca \label{Eq:tmp3} \\
  subt &= sbt \label{Eq:tmp4}
\end{align}
for all $v,w,s,t\in S$. Thus
\begin{equation}\label{Eq:tmp5}
sba \byeq{Eq:tmp4} scba \byeq{Eq:tmp3} sca
\end{equation}
for all $s\in S$. Now we compute
\[
ca \byeq{Eq:GRB0} caca \byeq{Eq:tmp5} caba \byeq{Eq:tmp4} cba \byeq{Eq:tmp5} cca\,.
\]
This is a contradiction with (**). Therefore for all $x,y,z\in S$, $xy^2 = xy$
or $z^2 x = zx$.
\end{proof}

This proves  (1) implies (2) of Theorem \ref{characterisation}.
We turn now to the converse.

\begin{lemma}\label{converse}
Let $S$ be a semigroup in $\GRB$ such that for all $x,y,z\in S$, $xy^2 = xy$ or $z^2 x = zx$.
Then for all $x,y,z,u,w\in S$, $xyz \in \{xywz, xuyz\}$.
\end{lemma}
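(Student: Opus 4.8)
The plan is to coordinatize the rectangular band $S^2$, rephrase the hypothesis as a pointwise condition on elements, upgrade that to a global alternative, and then finish with a two-line computation using Lemma~\ref{Lem:GRB_identities}.

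First I would set up coordinates on $S^2$. Since $S^2$ is a rectangular band, $S^2\cong L\times R$ with $L$ left-zero, $R$ right-zero and $(l_1,r_1)(l_2,r_2)=(l_1,r_2)$. For $s\in S$, left multiplication by $s$ carries $S^2$ into $S^2$ and, using $(l,r)=(l,r')(l,r)$, leaves the $R$-coordinate unchanged; hence $s\cdot(l,r)=(\phi_s(l),r)$ for a well-defined map $\phi_s\colon L\to L$, and dually $(l,r)\cdot s=(l,\psi_s(r))$. Writing $s^2=(\lambda_s,\rho_s)$ — so that for $e\in S^2$ the pair $(\lambda_e,\rho_e)$ is just $e$ itself, as $e^2=e$ — and expanding $st\cdot t=st^2$ and $s\cdot st=s^2 t$ yields the basic formula $st=(\phi_s(\lambda_t),\,\psi_t(\rho_s))$ for all $s,t\in S$.

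Next, set $\mathcal R=\{s\in S:\ sq^2=sq\ \text{for all }q\}$ and $\mathcal L=\{s\in S:\ q^2s=qs\ \text{for all }q\}$. Reading off coordinates from the basic formula, $s\in\mathcal R$ iff $\psi_q(\rho_s)=\rho_q$ for all $q$, and $s\in\mathcal L$ iff $\phi_q(\lambda_s)=\lambda_q$ for all $q$. Substituting $x=s$ into the hypothesis shows at once that $S=\mathcal R\cup\mathcal L$ (either $sy^2=sy$ for all $y$, so $s\in\mathcal R$, or $z^2 s=zs$ for all $z$, so $s\in\mathcal L$). I then claim $S=\mathcal R$ or $S=\mathcal L$. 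If not, pick $a\in\mathcal L\setminus\mathcal R$ and $c\in\mathcal R\setminus\mathcal L$; since $a\in\mathcal L$ and $c\in\mathcal R$ the basic formula collapses to $ca=(\phi_c(\lambda_a),\psi_a(\rho_c))=(\lambda_c,\rho_a)$, so $ca\in S^2$ has coordinates $\lambda_{ca}=\lambda_c$ and $\rho_{ca}=\rho_a$. Consequently $ca\in\mathcal R\iff a\in\mathcal R$ and $ca\in\mathcal L\iff c\in\mathcal L$, whence $ca\notin\mathcal R\cup\mathcal L$, contradicting $S=\mathcal R\cup\mathcal L$.

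Finally I would conclude. If $S=\mathcal R$, then $xy$ and $xyw$ lie in $\mathcal R$, so $(xy)z^2=(xy)z$ and $(xyw)z^2=(xyw)z$, and inserting $w$ by \eqref{Eq:GRB1} gives
\[
xyz \;=\; xyzz \;\byeq{Eq:GRB1}\; xywzz \;=\; xywz,
\]
so $xyz\in\{xywz,xuyz\}$. Dually, if $S=\mathcal L$, then $x^2(yz)=x(yz)$ and $x^2(uyz)=x(uyz)$, so $xyz=xxyz\byeq{Eq:GRB1}xxuyz=xuyz$, again giving $xyz\in\{xywz,xuyz\}$. The main obstacle is the middle step — passing from the pointwise ``$S=\mathcal R\cup\mathcal L$'' to the global ``$S=\mathcal R$ or $S=\mathcal L$'' — because a direct equational argument tends to loop: the square-absorption identities only let one toggle a square next to a fixed letter, never delete a fresh variable. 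The product $ca$ together with the coordinate bookkeeping is exactly what breaks the symmetry and forces the dichotomy.
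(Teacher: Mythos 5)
Your proof is correct, and it takes a genuinely different route from the paper's. You coordinatize the rectangular band $S^2$ as $L\times R$, package left and right translations into the maps $\phi_s,\psi_s$, derive the product formula $st=(\phi_s(\lambda_t),\psi_t(\rho_s))$, and use it to upgrade the pointwise hypothesis to a global dichotomy: either $sq^2=sq$ holds for all $s,q$ (your $S=\mathcal{R}$) or $q^2s=qs$ holds for all $q,s$ (your $S=\mathcal{L}$), the key point being that a product $ca$ of a strictly-$\mathcal{L}$ element and a strictly-$\mathcal{R}$ element inherits $\rho_{ca}=\rho_a$ and $\lambda_{ca}=\lambda_c$ and so escapes $\mathcal{R}\cup\mathcal{L}$; each branch then gives one of the identities $xyz=xywz$ or $xyz=xuyz$ by a short computation with \eqref{Eq:GRB1}, and all the intermediate steps (well-definedness of $\phi_s,\psi_s$, the product formula, the membership criteria for $\mathcal{R}$ and $\mathcal{L}$) check out. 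The paper instead argues purely equationally: assuming $abc\neq abdc$ and $abc\neq aebc$, it extracts conditional identities such as \eqref{Eq:24} and \eqref{Eq:27}, and in the case $xyy\neq xy$ derives \eqref{Eq:41a}--\eqref{Eq:45a} and finally $zuvw=zvw$, contradicting the assumption, after which $xyy=xy$ forces $abdc=abc$. Your structural argument buys more information: it shows that condition (2) forces $S$ to satisfy one of the identities $xy^2=xy$ or $x^2y=xy$ globally, hence to lie in one of the two maximal subvarieties $\GRB\cap[xywz=xyz]$ or $\GRB\cap[xuyz=xyz]$, rather than only the pointwise disjunction required by the statement. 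The paper's derivation, in turn, is self-contained at the level of the identities in Lemma \ref{Lem:GRB_identities}, needs no structure theorem for rectangular bands, and is the sort of proof that automated tools like \texttt{Prover9} can find and verify, in keeping with how the authors say they worked.
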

\begin{proof}
Assume that for some $a,b,c,d,e\in S$, $abc\neq abdc$ and $abc \neq aebc$.

First, if $xyy\neq xy$ for some $x,y\in S$, then for all $z\in S$, $zzx=zx$ and thus for all $z,u,w\in S$,
$zxuw = zzxuw \byeq{Eq:GRB2} zzuw$. Therefore for all $x,y,z,u,w\in S$,
\begin{equation}\label{Eq:24}
xyy = xy \quad\text{or}\quad zxuw = zzuw\,.
\end{equation}

By assumption $uxyy = uxy$ or $zzux = zux$ for all $x,y,z,u\in S$, so if $zzux\neq zux$ for some
$x,z,u\in S$, then for all $y\in S$, $uxyy = uxy$, and so
\[
xyy \byeq{Eq:GRB4} xyyy \byeq{Eq:GRB1} xyuxyy = xyuxy \byeq{Eq:GRB2} xyxy \byeq{Eq:GRB0} xy\,.
\]
Therefore for all $x,y,z,u\in S$,
\begin{equation}\label{Eq:27}
xyy = xy \quad\text{or}\quad zzux = zux\,.
\end{equation}

Now if $xyy\neq xy$ for some $x,y\in S$, then from \eqref{Eq:24} and \eqref{Eq:27},
\begin{align}
z_1 x u_1 w_1 &= z_1 z_1 u_1 w_1 \label{Eq:41a} \\
z_2 z_2 u_2 x &= z_2 u_2 x \label{Eq:41b}
\end{align}
for all $z_i,u_i,w_i\in S$. Thus for all $z,u,w\in S$,
\[
zxuw \byeq{Eq:GRB1} zxuxuw \byeq{Eq:41a} zzuxuw \byeq{Eq:41b} zuxuw \byeq{Eq:GRB1} zuuw \byeq{Eq:GRB3} zuw\,,
\]
and also
\[
zuxw \byeq{Eq:41b} zzuxw \byeq{Eq:GRB1} zzxw \byeq{Eq:GRB1} zzxxw \byeq{Eq:41b} zxxw \byeq{Eq:GRB3} zxw\,.
\]
Therefore
\begin{align}
z_1 u_1 x w_1 &= z_1 x w_1 \label{Eq:45b}\\
z_2 x u_2 w_2 &= z_2 u_2 w_2 \label{Eq:45a}
\end{align}
for all $z_i,u,v,w_i\in S$. Thus for all $z,u,v,w\in S$,
\[
zuvw \byeq{Eq:GRB2} zuxvw \byeq{Eq:45b} zxvw \byeq{Eq:45a} zvw\,.
\]
However, we have assumed that $aebc\neq abc$. It follows that $xyy=xy$ for all $x,y\in S$.
But then $abdc = abdcc \byeq{Eq:GRB2} abcc = abc$ which is a contradiction. This completes the proof.
\end{proof}

Lemmas \ref{lemma18} and \ref{converse} together complete the proof of Theorem \ref{characterisation}.

\section{Examples}

Now we answer Problem \ref{Prb:monzo}.
First we note that there is no variety of semigroups properly between $\I$ and $\GRB$.
This is because the two subvarieties $\GRB\cap [xywz = xyz]$ and $\GRB\cap [xuyz = xyz]$
are maximal in the lattice of subvarieties of $\GRB$ \cite{monzo_correction}. Thus the
variety generated by any semigroup in $\GRB$ which is not in $\I$ must be $\GRB$ itself.

Therefore a negative answer to Problem \ref{Prb:monzo} involves finding a class
properly contained between $\I$ and $\GRB$. Theorem \ref{characterisation} suggests how to do this.
Consider the class
\[
  \mathbf{A} = \GRB\cap [ xyy=xy \quad\text{or}\quad yyx = yx ]\,.
\]
It is obvious that $\I$ is contained in $ \mathbf{A}$.
The following is a smallest semigroup in $\mathbf{A}$ but not in $\I$:
\[
\begin{array}{c|ccccccccccc}
\cdot & 0 & 1 & 2 & 3 & 4 & 5 & 6 & 7 & 8 & 9 & 10\\
\hline
    0 & 0 & 3 & 0 & 3 & 4 & 4 & 3 & 0 & 4 & 0 & 3 \\
    1 & 0 & 4 & 0 & 3 & 4 & 4 & 3 & 0 & 4 & 0 & 3 \\
    2 & 7 & 6 & 9 & 6 & 5 & 8 & 10 & 9 & 8 & 9 & 10 \\
    3 & 0 & 4 & 0 & 3 & 4 & 4 & 3 & 0 & 4 & 0 & 3 \\
    4 & 0 & 4 & 0 & 3 & 4 & 4 & 3 & 0 & 4 & 0 & 3 \\
    5 & 7 & 5 & 7 & 6 & 5 & 5 & 6 & 7 & 5 & 7 & 6 \\
    6 & 7 & 5 & 7 & 6 & 5 & 5 & 6 & 7 & 5 & 7 & 6 \\
    7 & 7 & 6 & 7 & 6 & 5 & 5 & 6 & 7 & 5 & 7 & 6 \\
    8 & 9 & 8 & 9 & 10 & 8 & 8 & 10 & 9 & 8 & 9 & 10 \\
    9 & 9 & 10 & 9 & 10 & 8 & 8 & 10 & 9 & 8 & 9 & 10 \\
    10 & 9 & 8 & 9 & 10 & 8 & 8 & 10 & 9 & 8 & 9 & 10
\end{array}
\]

The following is a smallest semigroup in $\GRB$ but not in $\mathbf{A}$:
\[
\begin{array}{c|cccccccccc}
\cdot & 0 & 1 & 2 & 3 & 4 & 5 & 6 & 7 & 8 & 9\\
\hline
    0 & 0 & 2 & 2 & 8 & 2 & 0 & 0 & 2 & 8 & 8 \\
    1 & 5 & 3 & 4 & 3 & 7 & 6 & 6 & 7 & 9 & 3 \\
    2 & 0 & 8 & 2 & 8 & 2 & 0 & 0 & 2 & 8 & 8 \\
    3 & 6 & 3 & 7 & 3 & 7 & 6 & 6 & 7 & 3 & 3 \\
    4 & 5 & 9 & 4 & 9 & 4 & 5 & 5 & 4 & 9 & 9 \\
    5 & 5 & 4 & 4 & 9 & 4 & 5 & 5 & 4 & 9 & 9 \\
    6 & 6 & 7 & 7 & 3 & 7 & 6 & 6 & 7 & 3 & 3 \\
    7 & 6 & 3 & 7 & 3 & 7 & 6 & 6 & 7 & 3 & 3 \\
    8 & 0 & 8 & 2 & 8 & 2 & 0 & 0 & 2 & 8 & 8 \\
    9 & 5 & 9 & 4 & 9 & 4 & 5 & 5 & 4 & 9 & 9
\end{array}
\]

\medskip

This suggests the following problem, with which we conclude this paper.

\begin{problem}
  Classify the semigroup inclusion classes properly between $\I$ and $\GRB$.
\end{problem}

\begin{acknowledgment}
We thank Edmond Lee for calling our attention to \cite{monzo_correction}. We are pleased to acknowledge the use of the automated theorem prover \texttt{Prover9}
and the finite model builder \texttt{Mace4}, both developed by McCune \cite{mccune}.
\end{acknowledgment}

\end{document}